\def\frk{\mathfrak}               % font for "Fraktur"
\def\Ff{{\frk F}}
\def\Phi{{\frk N}}
\def\opn#1#2{\def#1{\operatorname{#2}}} % to make operators
\opn\chara{char} \opn\length{\ell} \opn\pd{pd} \opn\rk{rk}
\opn\projdim{proj\,dim} \opn\injdim{inj\,dim} \opn\rank{rank}
\opn\depth{depth} \opn\grade{grade} \opn\height{height}
\opn\embdim{emb\,dim} \opn\codim{codim}
\opn\Tr{Tr} \opn\bigrank{big\,rank}
\opn\superheight{superheight}\opn\lcm{lcm}
\opn\trdeg{tr\,deg}%\emph{
\opn\reg{reg} \opn\lreg{lreg} \opn\ini{in} \opn\lpd{lpd}
\opn\size{size}\opn{\mult}{mult}
\opn\div{div} \opn\Div{Div} \opn\cl{cl} \opn\Cl{Cl}
\opn\Spec{Spec} \opn\Supp{Supp} \opn\supp{supp} \opn\Sing{Sing}
\opn\Ass{Ass} \opn\Min{Min}
\opn\Ann{Ann} \opn\Rad{Rad} \opn\Soc{Soc}
\opn\Syz{Syz} \opn\Im{Im} \opn\Ker{Ker} \opn\Coker{Coker}
\opn\Am{Am} \opn\Hom{Hom} \opn\Tor{Tor} \opn\Ext{Ext}
\opn\End{End} \opn\Aut{Aut} \opn\id{id} \opn\ini{in}
\opn\nat{nat}
\opn\pff{pf}%   \pf exists already
\opn\Pf{Pf} \opn\GL{GL} \opn\SL{SL} \opn\mod{mod} \opn\ord{ord}
\opn\Gin{Gin}
\opn\Hilb{Hilb}\opn\adeg{adeg}\opn\std{std}\opn\ip{infpt}
\opn\Pol{Pol}
\opn\sat{sat}
\opn\Var{Var}
\opn\Gen{Gen}
\opn\aff{aff} \opn\con{conv} \opn\relint{relint} \opn\st{st}
\opn\lk{lk} \opn\cn{cn} \opn\core{core} \opn\vol{vol}
\opn\link{link} \opn\star{star}
\opn\gr{gr}
\def\Ic{{\mathcal I}}
\def\Fc{{\mathcal F}}
\def\Nc{{\mathcal N}}
\def\Mc{{\mathcal M}}
\def\pot#1#2{#1[\kern-0.28ex[#2]\kern-0.28ex]}
\opn\dirlim{\underrightarrow{\lim}}
\opn\inivlim{\underleftarrow{\lim}}
\def\Implies{\ifmmode\Longrightarrow \else
        \unskip${}\Longrightarrow{}$\ignorespaces\fi}
\def\implies{\ifmmode\Rightarrow \else
        \unskip${}\Rightarrow{}$\ignorespaces\fi}
\def\iff{\ifmmode\Longleftrightarrow \else
        \unskip${}\Longleftrightarrow{}$\ignorespaces\fi}
\newtheorem{Theorem}{Theorem}[section]
\newtheorem{Lemma}[Theorem]{Lemma}
\newtheorem{Corollary}[Theorem]{Corollary}
\newtheorem{Example}[Theorem]{Example}
\newtheorem{Definition}[Theorem]{Definition}
\let\epsilon\varepsilon
\let\phi=\varphi
\let\kappa=\varkappa
\def\qed{\ifhmode\textqed\fi
      \ifmmode\ifinner\quad\qedsymbol\else\dispqed\fi\fi}
\def\textqed{\unskip\nobreak\penalty50
       \hskip2em\hbox{}\nobreak\hfil\qedsymbol
       \parfillskip=0pt \finalhyphendemerits=0}
\def\dispqed{\rlap{\qquad\qedsymbol}}
\opn\dis{dis}
\def\pnt{{\raise0.5mm\hbox{\large\bf.}}}
\opn\Lex{Lex}
\begin{document}

\title{The $\mathcal{N}\mathcal{F}$-Number of a Simplicial Complex}
	\author[T.~Hibi]{Takayuki Hibi}
\address[Takayuki Hibi]{Department of Pure and Applied Mathematics,
	Graduate School of Information Science and Technology,
	Osaka University,
	Suita, Osaka 565-0871, Japan}
\email{hibi@math.sci.osaka-u.ac.jp}
\author[H.~Mahmood]{Hasan Mahmood}
\address[Hasan Mahmood]{Government College University Lahore, Pakistan}
\email{hasanmahmood@gcu.edu.pk}
\subjclass[2010]{13F55, 05E45}
\keywords{Stanley--Reisner complex, facet ideal, simplicial complex, vertex cover.}
\maketitle
\begin{abstract}
Let $\Delta$ be a simplicial complex on $[n]$.  The $\mathcal{N}\mathcal{F}$-complex of $\Delta$ is the simplicial complex $\delta_{\mathcal{N}\mathcal{F}}(\Delta)$ on $[n]$ for which the facet ideal of $\Delta$ is equal to the Stanley--Reisner ideal of $\delta_{\mathcal{N}\mathcal{F}}(\Delta)$.  Furthermore, for each $k = 2,3,\ldots$\,, we introduce {\em $k^{th}$ $\mathcal{N}\mathcal{F}$-complex} $\delta^{(k)}_{\mathcal{N}\mathcal{F}}(\Delta)$ which is inductively defined by $\delta^{(k)}_{\mathcal{N}\mathcal{F}}(\Delta) = \delta_{\mathcal{N}\mathcal{F}}(\delta^{(k-1)}_{\mathcal{N}\mathcal{F}}(\Delta))$ with setting $\delta^{(1)}_{\mathcal{N}\mathcal{F}}(\Delta) = \delta_{\mathcal{N}\mathcal{F}}(\Delta)$.  One can set $\delta^{(0)}_{\mathcal{N}\mathcal{F}}(\Delta) = \Delta$.  The $\mathcal{N}\mathcal{F}$-number of $\Delta$ is the smallest integer $k > 0$ for which $\delta^{(k)}_{\mathcal{N}\mathcal{F}}(\Delta) \simeq \Delta$.  In the present paper we are especially interested in the $\mathcal{N}\mathcal{F}$-number of a finite graph, which can be regraded as a simplicial complex of dimension one.  It is shown that the $\mathcal{N}\mathcal{F}$-number of the finite graph $K_n\coprod K_m$ on $[n + m]$, which is the disjoint union of the complete graphs $K_n$ on $[n]$ and $K_m$ on $[m]$, where $n \geq 2$ and $m \geq 2$ with $(n,m) \neq (2,2)$, is equal to $n + m + 2$.  Its corollary says that the $\mathcal{N}\mathcal{F}$-number of the complete bipartite graph $K_{n,m}$ on $[n+m]$ is also equal to $n + m + 2$.
\end{abstract}

\section*{Introduction}
The Stanley--Reisner ideal of a simplicial complex was introduced in 1974 by Stanley \cite{S} and Reisner \cite{R} independently.  On the other hand, Faridi \cite{F} studies the facet ideal of a simplicial complex.  Given a simplicial complex $\Delta$, one can naturally associate a simplicial complex $\delta_{\mathcal{N}\mathcal{F}}(\Delta)$ for which the facet ideal of $\Delta$ coincides with the Stanley--Reisner ideal of $\delta_{\mathcal{N}\mathcal{F}}(\Delta)$.  The topic of the present paper is the sequence of simplicial complexes
\[
\Delta, \, \delta_{\mathcal{N}\mathcal{F}}(\Delta), \, \delta_{\mathcal{N}\mathcal{F}}(\delta_{\mathcal{N}\mathcal{F}}(\Delta)), \, \cdots
\]
arising from $\Delta$.  Fundamental materials together with our main theorem will be presented in Section $1$, while its proof will be given in Section $2$.

\section{Facet ideals and Stanley--Reisner complexes}
A {\em simplicial complex} $\Delta$ on the vertex set $[n] = \{1,\ldots,n\}$ is a collection of subsets of $[n]$ with the property that if $F \in \Delta$ and if $G \subset F$, then $G \in \Delta$.  (We do {\em not} require the property that $\{i\} \in \Delta$ for each $i \in [n]$.)  Each $F \in \Delta$ is called a {\em face} of $\Delta$.  A {\em facet} is a maximal face of $\Delta$.  Let $\mathfrak{F}(\Delta)$ denote the set of facets of $\Delta$. When $\mathfrak{F}(\Delta)=\{F_1, \ldots, F_r\}$, it is clear that $\Delta$ consists of those subsets $G \subset [n]$ for which there is $1 \leq i \leq n$ with $G \subset F_i$.  One can then write $\Delta = \langle F_1, \ldots, F_r \rangle$.  Finally, the {\em dimension} of $\Delta$ is $\dim \Delta = d - 1$, where $d$ is the maximal cardinality of faces of $\Delta$.

Given simplicial complexes $\Delta$ and $\Delta'$ on $[n]$, we say that $\Delta$ is {\em ismorphic} to $\Delta'$ if there is a permutation $\pi$ on $[n]$ with $\pi(\Delta) = \Delta'$, where
$
\pi(\Delta) = \{ \pi(F) : F \in \Delta \}
$
and where
$
\pi(F) = \{ x_{\pi(i)} ; x_i \in F \}.
$

Let $S = K[x_1, \ldots, x_n]$ denote the polynomial ring in $n$ variables over a field $K$.
\begin{itemize}
\item
Given a simplicial complex $\Delta$ on $[n]$, the {\em facet ideal} of $\Delta$ is the ideal $\mathcal{I}_{\mathcal{F}}(\Delta)$ of $S$ generated by those squarefree monomials $\prod_{i \in F}x_i$ with $F \in \mathfrak{F}(\Delta)$.
\item
Given a squarefree monomial ideal $I$ of $S$, the {\em Stanley--Reisner complex} of $I$ is the simplicial complex $\Delta_{\mathcal{N}}(I)$ on $[n]$ consisting of those subsets $F \subset [n]$ for which $\prod_{i \in F} x_i \not\in I$.
\end{itemize}

Thus in particular if $\Delta = \{ \emptyset \}$, then $\mathcal{I}_{\mathcal{F}}(\Delta) = (0)$.  If $I = (0)$, then $\Delta_{\mathcal{N}}(I) = \langle [n] \rangle$.  If $\Delta = \langle [n] \rangle$, then $\mathcal{I}_{\mathcal{F}}(\Delta) = (x_1 \cdots x_n)$.  Furthermore, if $I = (x_1, \ldots, x_n)$, then $\Delta_{\mathcal{N}}(I) = \{ \emptyset \}$.

Let $\Delta$ be a simplicial complex on $[n]$.  A subset $C$ of $[n]$ is said to be a {\em vertex cover} of $\Delta$ if $C \cap F \neq \emptyset$ for each $F \in \mathfrak{F}(\Delta)$.  A vertex cover $C$ of $\Delta$ is {\em minimal} if no proper subset of $C$ forms a vertex cover of $\Delta$.  Let ${\rm MIN}(\Delta)$ denote the set of minimal vertex covers of $\Delta$.
The standard primary decomposition (\cite[p.~12]{HH}) of $\Ic_{\Fc}(\Delta)$ is
\begin{eqnarray}
\label{primary}
\Ic_{\Fc}(\Delta) = \bigcap_{\{x_{i_1}, \ldots, x_{i_s}\} \in {\rm MIN}(\Delta)} (x_{i_1}, \ldots, x_{i_s}).
\end{eqnarray}
Furthermore,
\begin{eqnarray}
\label{facetSR}
\mathfrak{F}(\Delta_{\mathcal{N}}(\Ic_{\Fc}(\Delta))) = \{ F \subset [n] : [n] \setminus F \in {\rm MIN}(\Delta) \}.
\end{eqnarray}

\begin{Example}
\label{aaaaa}
{\em
Let $\Delta$ be the simplicial complex on $[5]$ with $$\Ic_{\Fc}(\Delta) = (x_1x_2, x_2x_3x_4, x_2x_5, x_4x_5).$$
One has
\begin{eqnarray*}
\Ic_{\Fc}(\Delta) & = &
(x_1x_2, x_2x_3x_4, x_2x_5, x_4)
\cap
(x_1x_2, x_2x_3x_4, x_2x_5, x_5) \\
& = &
(x_1x_2, x_2x_5, x_4)
\cap
(x_1x_2, x_2x_3x_4, x_5) \\
& = & (x_2, x_4) \cap (x_2, x_5) \cap (x_1, x_3, x_5) \cap (x_1, x_4, x_5)
\end{eqnarray*}
and
\[
\Delta_{\mathcal{N}}(\Ic_{\Fc}(\Delta)) = \langle \{2,3\}, \{2,4\}, \{1,3,4\}, \{1,3,5\} \rangle.
\]
}
\end{Example}

\begin{Definition}
{\em
Given a simplicial complex $\Delta$ on $[n]$, we say that the Stanley--Reisner complex of the facet ideal of $\Delta$ is the {\em $\mathcal{N}\mathcal{F}$-complex} of $\Delta$.  Let $\delta_{\mathcal{N}\mathcal{F}}(\Delta)$ denote the {\em $\mathcal{N}\mathcal{F}$-complex} of $\Delta$.  Thus
$$
\delta_{\mathcal{N}\mathcal{F}}(\Delta) = \Delta_{\mathcal{N}}(\Ic_{\Fc}(\Delta)).
$$
Furthermore, for each $k = 2,3,\ldots$\,, we introduce {\em $k^{th}$ $\mathcal{N}\mathcal{F}$-complex} $\delta^{(k)}_{\mathcal{N}\mathcal{F}}(\Delta)$ which is inductively defined by $\delta^{(k)}_{\mathcal{N}\mathcal{F}}(\Delta) = \delta_{\mathcal{N}\mathcal{F}}(\delta^{(k-1)}_{\mathcal{N}\mathcal{F}}(\Delta))$ with setting $\delta^{(1)}_{\mathcal{N}\mathcal{F}}(\Delta) = \delta_{\mathcal{N}\mathcal{F}}(\Delta)$.  One can set $\delta^{(0)}_{\mathcal{N}\mathcal{F}}(\Delta) = \Delta$.
}
\end{Definition}

\begin{Example}
{\em
Let $\Delta$ be the simplicial complex of Example \ref{aaaaa}.  Then the standard primary decomposition of $\Ic_\Fc(\delta_{\mathcal{N}\mathcal{F}}(\Delta))$ is
\[
(x_1, x_2, x_4) \cap (x_1, x_2, x_5) \cap (x_2, x_3, x_4) \cap (x_2, x_3, x_5) \cap (x_2, x_4, x_5) \cap (x_3, x_4).
\]
Thus
\[
\delta^{(2)}_{\mathcal{N}\mathcal{F}}(\Delta) = \langle \{1,3\}, \{1,4\}, \{1,5\}, \{3,4\}, \{3,5\}, \{1,2,5\} \rangle.
\]
}
\end{Example}

\begin{Lemma}
\label{NF}
Let $\Delta$ be a simplicial complex on $[n]$.  Then there exist a positive integer $q$ with
$$\delta^{(q)}_{\mathcal{N}\mathcal{F}}(\Delta) = \Delta.$$
\end{Lemma}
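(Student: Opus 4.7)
My plan is to prove the slightly stronger statement that there exists a positive integer $q$, depending only on $n$, such that $\delta^{(q)}_{\mathcal{N}\mathcal{F}}(\Delta) = \Delta$ for \emph{every} simplicial complex $\Delta$ on $[n]$. The idea is to show that $\delta_{\mathcal{N}\mathcal{F}}$ is a bijection from the (finite) set $\mathcal{S}_n$ of simplicial complexes on $[n]$ to itself; once this is established, $\delta_{\mathcal{N}\mathcal{F}}$ is a permutation of a finite set and therefore has finite order, so some positive power is the identity, and $q$ may be taken to be the length of the orbit of $\Delta$.

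To prove bijectivity I would factor $\delta_{\mathcal{N}\mathcal{F}}$ as the composition of two natural bijections. First, the assignment $\Delta \mapsto \Ic_{\Fc}(\Delta)$ is a bijection between $\mathcal{S}_n$ and the set of squarefree monomial ideals of $S = K[x_1,\ldots,x_n]$: the monomials $\prod_{i \in F}x_i$ with $F \in \mathfrak{F}(\Delta)$ form the unique minimal monomial generating set of $\Ic_{\Fc}(\Delta)$, because the facets of $\Delta$ form an antichain under inclusion so no such monomial divides another; conversely, the supports of the minimal monomial generators of any squarefree monomial ideal form an antichain and hence are the facets of a unique simplicial complex on $[n]$. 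Second, the assignment $I \mapsto \Delta_{\mathcal{N}}(I)$ is the classical Stanley--Reisner bijection in the opposite direction. The composition $\delta_{\mathcal{N}\mathcal{F}} = \Delta_{\mathcal{N}} \circ \Ic_{\Fc}$ is therefore a bijection of $\mathcal{S}_n$ onto itself.

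Equivalently, one may describe the inverse operator explicitly: the minimal generators of $\Ic_{\Fc}(\Delta)$ are precisely the minimal non-faces of $\delta_{\mathcal{N}\mathcal{F}}(\Delta)$, so the minimal non-faces of $\delta_{\mathcal{N}\mathcal{F}}(\Delta)$ coincide with the facets of $\Delta$. Hence $\delta_{\mathcal{N}\mathcal{F}}^{-1}$ sends a complex $\Delta'$ to the simplicial complex whose facets are the minimal non-faces of $\Delta'$, which is well defined because minimal non-faces form an antichain. The argument is essentially routine and I do not anticipate any real obstacle beyond mild care with the boundary case $\Delta = \{\emptyset\}$ (paired with the zero ideal under the first bijection), which is harmless for the counting argument.
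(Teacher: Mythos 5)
Your proof is correct and takes essentially the same approach as the paper: both arguments come down to showing that $\delta_{\mathcal{N}\mathcal{F}}$ is an injective self-map of the finite set of simplicial complexes on $[n]$, so that iterating it from $\Delta$ must eventually return to $\Delta$ itself. The only difference is cosmetic: you certify injectivity by recovering the facets of $\Delta$ as the minimal non-faces of $\delta_{\mathcal{N}\mathcal{F}}(\Delta)$, whereas the paper recovers $\mathcal{I}_{\mathcal{F}}(\Delta)$ from the standard primary decomposition determined by $\delta_{\mathcal{N}\mathcal{F}}(\Delta)$.
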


\begin{proof}
Suppose that $\delta^{(i)}_{\mathcal{N}\mathcal{F}}(\Delta)
\neq \delta^{(j)}_{\mathcal{N}\mathcal{F}}(\Delta)$
for all $i$ and $j$ with $0 \leq i < j$.  It then turns out that the number of simplicial complexes on $[n]$ cannot be finite.  Clearly, this is a contradiction.  It follows that there exist $0 \leq i < j$ with $\delta^{(i)}_{\mathcal{N}\mathcal{F}}(\Delta) = \delta^{(j)}_{\mathcal{N}\mathcal{F}}(\Delta)$.  Let $j_0$ denote the smallest integer for which there is $0 \leq i < j_0$ with $\delta^{(i)}_{\mathcal{N}\mathcal{F}}(\Delta) = \delta^{(j_0)}_{\mathcal{N}\mathcal{F}}(\Delta)$.  We claim $i = 0$.  Let $i > 0$.  Then $\delta_{\Nc\Fc}(\delta^{(i-1)}_{\mathcal{N}\mathcal{F}}(\Delta)) = \delta_{\Nc\Fc}(\delta^{(j_0-1)}_{\mathcal{N}\mathcal{F}}(\Delta))$.

In general, if $\Delta'$ and $\Delta''$ are simplicial complexes on $[n]$ with $\delta_{\Nc\Fc}(\Delta') = \delta_{\Nc\Fc}(\Delta'')$, then it follows from (\ref{primary}) and (\ref{facetSR}) that the standard primary decomposition of $\Ic_\Fc(\Delta')$ must coincide with that of $\Ic_\Fc(\Delta'')$.  Hence $\Ic_\Fc(\Delta') = \Ic_\Fc(\Delta'')$ and then $\Delta' = \Delta''$.

Now, since $\delta_{\Nc\Fc}(\delta^{(i-1)}_{\mathcal{N}\mathcal{F}}(\Delta)) = \delta_{\Nc\Fc}(\delta^{(j_0-1)}_{\mathcal{N}\mathcal{F}}(\Delta))$, it follows that $\delta^{(i-1)}_{\mathcal{N}\mathcal{F}}(\Delta) = \delta^{(j_0-1)}_{\mathcal{N}\mathcal{F}}(\Delta)$.  This contradict the choice of $j_0$.  Hence $i = 0$, as desired.
\, \, \, \, \, \, \, \, \, \, \, \, \, \,
\end{proof}

Now, Lemma \ref{NF} guarantees the existence of the smallest integer $t_0 \geq 1$ for which $\delta^{(t_0)}_{\mathcal{N}\mathcal{F}}(\Delta)$ is isomorphic to $\Delta$.  The smallest integer $t_0$ is called the {\em $\mathcal{N}\mathcal{F}$-number} of $\Delta$.

\begin{Example}
{\em
Let $n = 3$ and $\Delta = \{ \emptyset \}$.  Then $\delta^{(1)}_{\mathcal{N}\mathcal{F}}(\Delta) = \langle [3] \rangle$.  Thus $\delta^{(2)}_{\mathcal{N}\mathcal{F}}(\Delta) = \langle \{1,2\}, \{2,3\}, \{1,3\} \rangle$ and $\delta^{(3)}_{\mathcal{N}\mathcal{F}}(\Delta) = \langle \{1\}, \{2\}, \{3\} \rangle$.  Hence $\delta^{(4)}_{\mathcal{N}\mathcal{F}}(\Delta) = \Delta$.  Thus the $\mathcal{N}\mathcal{F}$-number of $\Delta = \{ \emptyset \}$ on $[3]$ is $4$.  In general, the $\mathcal{N}\mathcal{F}$-number of $\Delta = \{ \emptyset \}$ on $[n]$ is $n + 1$.
}
\end{Example}

\begin{Example}
{\em
Let $n = 3$ and $\Delta = \{ \{1\}, \{2,3\} \}$.  Then
\begin{eqnarray*}
\delta^{(1)}_{\mathcal{N}\mathcal{F}}(\Delta) & = & \langle \{2\}, \{3\} \rangle, \, \, \, \, \,
\delta^{(2)}_{\mathcal{N}\mathcal{F}}(\Delta) =  \langle \{1\} \rangle, \, \, \, \, \,
\delta^{(3)}_{\mathcal{N}\mathcal{F}}(\Delta) = \langle \{2,3\} \rangle, \\
\delta^{(4)}_{\mathcal{N}\mathcal{F}}(\Delta) & = & \langle \{1,2\}, \{1,3\} \rangle, \, \, \, \, \,
\delta^{(5)}_{\mathcal{N}\mathcal{F}}(\Delta) = \langle \{1\}, \{2, 3\} \rangle.
\end{eqnarray*}
Thus the $\mathcal{N}\mathcal{F}$-number of $\Delta$ is $5$.
}
\end{Example}

\begin{Example}
{\em
Let $\Delta = \{\{1,2\},\{2,3\},\{3,4\}\}$ be the simplicial complex on $[4]$.  Then
$\delta^{(1)}_{\mathcal{N}\mathcal{F}}(\Delta) = \langle \{1,3\},\{1,4\},\{2,4\} \rangle$ and $\delta^{(2)}_{\mathcal{N}\mathcal{F}}(\Delta) =  \Delta$.  Since $\delta^{(1)}_{\mathcal{N}\mathcal{F}}(\Delta)$ is isomorphic to $\Delta$, it follows that the $\mathcal{N}\mathcal{F}$-number of $\Delta$ is equal to $1$.
}
\end{Example}

Even though the $\mathcal{N}\mathcal{F}$-number can be defined for an arbitrary simplicial complex, in the present paper we are especially interested in the $\mathcal{N}\mathcal{F}$-number for a finite graph, which can be regarded as a simplicial complex of dimension one.

Let $P_n$ denote the {\em path} on $[n]$.  Thus the edges of $P_n$ are those $\{i, i+1\}$ with $1 \leq i < n$.  Let $C_n$ denote the {\em cycle} on $[n]$.  Thus the edges of $C_n$ are those $\{i, i+1\}$ with $1 \leq i < n$ together with $\{1, n\}$.  Let $K_n$ denote the {\em complete graph} on $[n]$.  Thus the edges of $K_n$ are those $\{i, j\}$ with $1 \leq i < j \leq n$.

\begin{Example}
{\em
(a) Let $a_n$ denote the $\mathcal{N}\mathcal{F}$-number of $P_n$.  Then
\[
a_2 = 3 , a_3 = 5, a_4 = 1, a_5 = 8, a_6 = 48 , a_7 = 47, a_8 = 552
\]

(b) Let $b_n$ denote the $\mathcal{N}\mathcal{F}$-number of $C_n$.  Then
\[
b_3 = 4 , b_4 = 2, b_5 = 2 , b_6 = 12, b_7 = 8 , b_8 = 26, b_9 = 139
\]

(c) Let $c_n$ denote the $\mathcal{N}\mathcal{F}$-number of $K_n$.  Then
\[
c_n = n+1
\]
}
\end{Example}

Let $K_n\coprod K_m$ denote the finite graph on $[n+m]$, which is the disjoint union of $K_n$ and $K_m$.  In the present paper the $\mathcal{N}\mathcal{F}$-number of $K_n\coprod K_m$ is computed.

\begin{Example}
{\em
The $\mathcal{N}\mathcal{F}$-number of $K_2\coprod K_2$ is equal to $2$.
}
\end{Example}

We now come to the main result of this paper.

\begin{Theorem}
\label{main}
The $\mathcal{N}\mathcal{F}$-number of the finite graph $K_n\coprod K_m$ on $[n+m]$, where $n \geq 2$ and $m \geq 2$ with $(n,m) \neq (2,2)$, is equal to $n + m + 2$.
\end{Theorem}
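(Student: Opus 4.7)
The plan is to compute $\delta^{(k)}_{\mathcal{N}\mathcal{F}}(K_n \coprod K_m)$ iteratively for $k = 1, 2, \ldots, n+m+2$, using (\ref{facetSR}): the facets of $\delta_{\mathcal{N}\mathcal{F}}(\Delta)$ are the complements in $[n+m]$ of the minimal vertex covers of $\Delta$. Writing $A = [n]$ and $B = \{n+1, \ldots, n+m\}$, the key observation is that the bipartition $[n+m] = A \cup B$ is respected by $\delta_{\mathcal{N}\mathcal{F}}$, so each facet $F$ of $\Delta_k := \delta^{(k)}_{\mathcal{N}\mathcal{F}}(K_n \coprod K_m)$ is naturally classified by its type $(\alpha,\beta) = (|F \cap A|,|F \cap B|)$, and the whole facet set of each $\Delta_k$ can be enumerated by finitely many parametric families indexed by subsets of $A$ and of $B$ of prescribed sizes.

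The first three iterations I compute directly: $\Delta_1 = K_{n,m}$ with facets of type $(1,1)$; $\Delta_2 = \langle A, B \rangle$ with types $(n,0)$ and $(0,m)$; and $\Delta_3$ with facets $(A \setminus \{i\}) \cup (B \setminus \{j\})$ for $i \in A$, $j \in B$, of type $(n-1, m-1)$. Each follows from a routine enumeration of minimal vertex covers. Starting from $\Delta_3$, I then inductively determine $\Delta_4, \Delta_5, \ldots, \Delta_{n+m+1}$ by the same procedure, tracking at each stage how the minimal vertex covers split across $A$ and $B$. The target is to show that $\Delta_{n+m+2}$ has facets consisting of exactly the $2$-subsets of $A$ together with the $2$-subsets of $B$, so that $\Delta_{n+m+2} = K_n \coprod K_m$.

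To see that $n+m+2$ is the smallest positive integer with this property, I rely on dimension and connectivity. Since $K_n \coprod K_m$ is $1$-dimensional and disconnected on $n+m$ vertices, a candidate $\Delta_k$ isomorphic to it must itself be $1$-dimensional with two connected components. Now $\Delta_1 = K_{n,m}$ is $1$-dimensional but connected, hence not isomorphic to $\Delta_0$. For $2 \leq k \leq n+m+1$, the explicit form of $\Delta_k$ produced in the inductive computation always contains a facet of size at least $3$ (for instance, the facets of $\Delta_3$ already have size $n+m-2 \geq 3$ since $(n,m) \neq (2,2)$), so $\dim \Delta_k \geq 2$ and the isomorphism is ruled out on dimensional grounds. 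The hypothesis $(n,m) \neq (2,2)$ is essential here: when $n = m = 2$ the facets $A$ and $B$ of $\Delta_2$ are themselves edges, so $\Delta_2 = K_2 \coprod K_2 = \Delta_0$ and the cycle collapses to length $2$.

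The main obstacle is the careful enumeration of minimal vertex covers of $\Delta_k$ for $3 \leq k \leq n+m+1$. At each intermediate stage the facets occur simultaneously in several types $(\alpha,\beta)$, and the minimality condition on vertex covers produces cross-type constraints: a candidate cover must meet every facet of every type, while no proper subset should. The cleanest route, I expect, is to prove by induction on $k$ an explicit parametric description of the facets of $\Delta_k$ as unions of subsets of $A$ and of $B$ of specific cardinalities (with those cardinalities evolving predictably with $k$), so that at each step the minimal-cover enumeration reduces to a systematic case analysis on the pair $(|C \cap A|, |C \cap B|)$ rather than an ad hoc computation.
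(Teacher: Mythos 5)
Your strategy coincides with the paper's: compute the first three iterates directly, observe that every iterate is invariant under the action of $S_n \times S_m$ so that its facet set is a union of ``type classes'' $\mathcal{M}_{(i,j)}$, push an explicit parametric description of the facets through the induction up to $k = n+m+2$, and rule out $1 \leq k < n+m+2$ by dimension (plus connectivity at $k=1$). Your closing observations are also sound: the target facet set at step $n+m+2$ is exactly the $2$-subsets of $A$ together with the $2$-subsets of $B$, and the role of the hypothesis $(n,m) \neq (2,2)$ is correctly identified.

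The gap is that the heart of the argument --- the explicit description of $\mathfrak{F}(\delta^{(k)}_{\mathcal{N}\mathcal{F}}(\Delta))$ for $4 \leq k \leq n+m+2$, which is the paper's Lemma~\ref{FP} --- is announced as ``the main obstacle'' and then deferred rather than carried out. You assert that the type cardinalities ``evolve predictably with $k$'' and that every intermediate $\Delta_k$ with $2 \leq k \leq n+m+1$ has a facet of size at least $3$, but both claims are exactly what the induction must establish, and neither is verified beyond $k=3$. In particular, the evolution is \emph{not} uniform: assuming $n \leq m$, the recursion passes through four distinct regimes ($4 \leq k \leq n+2$, $k = n+3$, $n+4 \leq k \leq m+2$, and $m+3 \leq k \leq n+m+2$), because the types collide with the boundaries $|F \cap A| = n$ and $|F \cap B| = m$ at different times; the facet formula changes shape at each transition, and the dimension formula has a corresponding dip at $k = n+3$ (where $\dim \delta^{(n+3)}_{\mathcal{N}\mathcal{F}}(\Delta) = m-1$, which exceeds $1$ only because $m \geq 3$ under the standing hypotheses). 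Without writing down these four regimes and checking the minimal-vertex-cover enumeration across each transition, neither the identity $\delta^{(n+m+2)}_{\mathcal{N}\mathcal{F}}(\Delta) = \Delta$ nor the lower bound $\dim \delta^{(k)}_{\mathcal{N}\mathcal{F}}(\Delta) > 1$ for $2 \leq k < n+m+2$ is established. So the proposal is a correct plan identical in outline to the paper's proof, but the substantive computation that makes the plan work is missing.
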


Let $K_{n,m}$ denote the complete bipartite graph on
\[
[n+m] = \{1,\ldots,n\} \cup \{n+1,\ldots, n+m\}.
\]

\begin{Corollary}
\label{cor}
The $\mathcal{N}\mathcal{F}$-number of the complete bipartite graph $K_{n,m}$ on $[n+m]$, where $n \geq 2$ and $m \geq 2$ with $(n,m) \neq (2,2)$, is equal to $n + m + 2$.
\end{Corollary}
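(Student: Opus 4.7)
The plan is to reduce Corollary~\ref{cor} to Theorem~\ref{main} via two observations. First, I will verify directly that $\delta_{\mathcal{N}\mathcal{F}}(K_n \coprod K_m)$ equals $K_{n,m}$ as a simplicial complex on $[n+m]$. Second, I will establish the general principle that a simplicial complex and its $\mathcal{N}\mathcal{F}$-complex always share the same $\mathcal{N}\mathcal{F}$-number. Once both are in hand, the corollary is immediate.

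For the first observation, since $K_n$ and $K_m$ share no vertex, every minimal vertex cover of $K_n \coprod K_m$ is a union $C_1 \cup C_2$ with $C_1 \in {\rm MIN}(K_n)$ and $C_2 \in {\rm MIN}(K_m)$. Because the minimal vertex covers of a complete graph are exactly the vertex-complements of its singletons, one obtains
\[
{\rm MIN}(K_n \coprod K_m) \;=\; \bigl\{\, [n+m] \setminus \{i,j\} \;:\; i \in [n],\ j \in \{n+1,\ldots,n+m\} \,\bigr\}.
\]
Applying formula~(\ref{facetSR}) then shows that the facets of $\delta_{\mathcal{N}\mathcal{F}}(K_n \coprod K_m)$ are precisely the pairs $\{i,j\}$ with $i \in [n]$ and $j \in \{n+1,\ldots,n+m\}$, which are exactly the edges of $K_{n,m}$.

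For the second observation, I invoke two properties of $\delta_{\mathcal{N}\mathcal{F}}$: it is injective on simplicial complexes on $[n]$ (this is exactly the intermediate assertion established inside the proof of Lemma~\ref{NF}), and it commutes with vertex permutations $\pi$, because both the facet-ideal and the Stanley--Reisner constructions are equivariant with respect to $\pi$. Combining the two, for any simplicial complexes $\Delta'$ and $\Delta''$ on $[n]$, the relation $\Delta' \simeq \Delta''$ is equivalent to $\delta_{\mathcal{N}\mathcal{F}}(\Delta') \simeq \delta_{\mathcal{N}\mathcal{F}}(\Delta'')$: the forward direction follows from equivariance, and the converse from equivariance combined with injectivity. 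Iterating, $\delta^{(k)}_{\mathcal{N}\mathcal{F}}(\Delta) \simeq \Delta$ if and only if $\delta^{(k)}_{\mathcal{N}\mathcal{F}}(\delta_{\mathcal{N}\mathcal{F}}(\Delta)) \simeq \delta_{\mathcal{N}\mathcal{F}}(\Delta)$, so $\Delta$ and $\delta_{\mathcal{N}\mathcal{F}}(\Delta)$ have equal $\mathcal{N}\mathcal{F}$-numbers.

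Assembling the two pieces: the $\mathcal{N}\mathcal{F}$-number of $K_{n,m} = \delta_{\mathcal{N}\mathcal{F}}(K_n \coprod K_m)$ agrees with that of $K_n \coprod K_m$, which equals $n+m+2$ by Theorem~\ref{main}. The only delicate point is the formal statement of the second observation, but it is already latent in the proof of Lemma~\ref{NF}; no computation is required beyond the minimal-vertex-cover enumeration performed in the first step.
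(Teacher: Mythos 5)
Your proof is correct, and it diverges from the paper in the second step. The paper's own proof is a two-line affair: it quotes Lemma~\ref{first}(i) to identify $K_{n,m}$ with $\delta^{(1)}_{\Nc\Fc}(K_n\coprod K_m)$ (your first observation, verified by the same minimal-vertex-cover enumeration) and then appeals to the dimension formula~(\ref{simeqdim}) established inside the proof of Theorem~\ref{main}: since $\dim\delta^{(k)}_{\Nc\Fc}(K_n\coprod K_m)>1$ for $2\le k<n+m+2$ while $\dim K_{n,m}=1$, no earlier iterate of $K_{n,m}$ can be isomorphic to it. You instead prove the general principle that $\Delta$ and $\delta_{\Nc\Fc}(\Delta)$ have the same $\Nc\Fc$-number, by combining the injectivity of $\delta_{\Nc\Fc}$ (which is indeed the intermediate assertion in the proof of Lemma~\ref{NF}) with its equivariance under permutations of $[n]$; both ingredients are sound, and the iteration argument is valid. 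Your route buys two things: it is reusable (every complex in an $\Nc\Fc$-equivalence class gets the same $\Nc\Fc$-number, which is also relevant to the question posed at the end of the paper), and it silently covers the one case the paper's citation of~(\ref{simeqdim}) does not literally reach, namely $\delta^{(n+m+1)}_{\Nc\Fc}(K_{n,m})=\delta^{(n+m+2)}_{\Nc\Fc}(K_n\coprod K_m)=K_n\coprod K_m$, which has the same dimension as $K_{n,m}$ and must instead be distinguished from it by connectedness (or an edge count). The paper's route is shorter only because the dimension table was already computed for Theorem~\ref{main}; as a self-contained argument yours is the cleaner one.
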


The following Section $2$ is devoted to the proof of Theorem \ref{main}.  Corollary \ref{cor} follows easily from the computation done in the proof of Theorem \ref{main}.

\section{The $\mathcal{N}\mathcal{F}$-Number of $K_n\coprod K_m$}

From now on, we fix integers $n \geq 2$ and $m \geq 2$ with $(n,m) \neq (2,2)$.  Instead of $[n + m]$, the vertex set of $K_n\coprod K_m$ is denoted by $V=V_n \cup V_m$, where $$V_n = \{x_1, \ldots, x_n\}, \, \, \, V_m = \{y_1, \ldots, y_m\}.$$

\begin{Lemma}
\label{first}
Let $\Delta=K_n\coprod K_m$.  One has
\begin{itemize}
\item[(i)]
$\Ff(\delta^{(1)}_{\mathcal{N}\mathcal{F}}(\Delta)) = \{ \{x_i,y_j\} : i \in [n], \, j \in [m] \}$;
\item[(ii)]
$\Ff(\delta^{(2)}_{\mathcal{N}\mathcal{F}}(\Delta)) = \{ V_n, V_m \}$;
\item[(iii)]
$\Ff(\delta^{(3)}_{\mathcal{N}\mathcal{F}}(\Delta)) = \{ (V_n \setminus \{x_i\}) \cup (V_m \setminus \{y_j\}) : i \in [n], \, j \in [m] \}$.
\end{itemize}
\end{Lemma}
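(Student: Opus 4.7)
The plan is to apply formula (\ref{facetSR}) three times in succession, which tells us that the facets of $\delta_{\mathcal{N}\mathcal{F}}(\Delta')$ are exactly the complements in $V$ of the minimal vertex covers of $\Delta'$.  Thus parts (i), (ii), (iii) reduce to identifying ${\rm MIN}(\Delta)$, ${\rm MIN}(\delta^{(1)}_{\mathcal{N}\mathcal{F}}(\Delta))$, and ${\rm MIN}(\delta^{(2)}_{\mathcal{N}\mathcal{F}}(\Delta))$, respectively.

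For (i), I would first observe that the facets of $\Delta=K_n\coprod K_m$ are the edges $\{x_i,x_j\}$ of $K_n$ together with the edges $\{y_i,y_j\}$ of $K_m$.  Since $V_n$ and $V_m$ are disjoint and no facet meets both, a subset $C\subset V$ is a (minimal) vertex cover of $\Delta$ if and only if $C\cap V_n$ is a (minimal) vertex cover of $K_n$ and $C\cap V_m$ is a (minimal) vertex cover of $K_m$.  The minimal vertex covers of the complete graph $K_n$ are precisely the sets $V_n\setminus\{x_i\}$, since omitting two vertices leaves the edge between them uncovered.  Hence ${\rm MIN}(\Delta)$ consists of the sets $(V_n\setminus\{x_i\})\cup(V_m\setminus\{y_j\})$, whose complements in $V$ are the sets $\{x_i,y_j\}$, giving (i).

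For (ii), part (i) tells us that $\delta^{(1)}_{\mathcal{N}\mathcal{F}}(\Delta)$ is the complete bipartite graph on $V_n\cup V_m$ with facets $\{x_i,y_j\}$.  If a vertex cover $C$ of this graph omits some $x_i\in V_n$, then every edge $\{x_i,y_j\}$ forces $y_j\in C$, so $V_m\subset C$; symmetrically, omitting some $y_j$ forces $V_n\subset C$.  The minimal vertex covers are therefore exactly $V_n$ and $V_m$, whose complements in $V$ are $V_m$ and $V_n$, yielding (ii).

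For (iii), part (ii) gives $\delta^{(2)}_{\mathcal{N}\mathcal{F}}(\Delta)=\langle V_n,V_m\rangle$, a complex with exactly two facets.  A subset $C\subset V$ is a minimal vertex cover precisely when it meets each of $V_n$ and $V_m$ in a single element, so ${\rm MIN}(\delta^{(2)}_{\mathcal{N}\mathcal{F}}(\Delta))=\{\{x_i,y_j\}:i\in[n],j\in[m]\}$, and complementing produces the desired facets $(V_n\setminus\{x_i\})\cup(V_m\setminus\{y_j\})$.  There is no genuine obstacle in this lemma; the only step that requires a moment's thought is the disjoint-union argument in (i), and everything else is a direct application of (\ref{facetSR}) together with the minimal primes of $(x_1\cdots x_n,y_1\cdots y_m)$ which are the ideals $(x_i,y_j)$.
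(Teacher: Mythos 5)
Your proposal is correct and follows essentially the same route as the paper: identify the minimal vertex covers of $\Delta$, $\delta^{(1)}_{\mathcal{N}\mathcal{F}}(\Delta)$, and $\delta^{(2)}_{\mathcal{N}\mathcal{F}}(\Delta)$ in turn and pass to complements via (\ref{facetSR}). The paper's argument for (ii) is word-for-word the same case split you give (omitting an $x_i$ forces $V_m\subset C$, and symmetrically), and your slightly more explicit justification of ${\rm MIN}(\Delta)$ in (i) matches the paper's observation that $|M\cap V_n|\geq n-1$ and $|M\cap V_m|\geq m-1$.
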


\begin{proof}
Let $M \in {\rm MIN}(\Delta)$.  Then $|M\cap V_n|\geq n-1$ and $|M\cap V_m|\geq m-1$.  It then follows that
$${\rm MIN}(\Delta)= \{(V_n \setminus \{x_i\})\cup (V_m \setminus \{y_j\}) : i\in [n], \, j\in [m]\},$$
which further gives (i) by equation (2), as required.

Let $M \in {\rm MIN}(\delta^{(1)}_{\mathcal{N}\mathcal{F}}(\Delta))$. If there is $i \in [n]$ with $x_i \not\in M$.  Then $V_m \subset M$.  If there is $j \in [m]$ with $y_j \not\in M$.  Then $V_n \subset M$.  Since $V_n$ and $V_m$ belong to ${\rm MIN}(\delta^{(1)}_{\mathcal{N}\mathcal{F}}(\Delta))$ the desired (ii) follows.

It is clear that ${\rm MIN}(\delta^{(2)}_{\mathcal{N}\mathcal{F}}(\Delta))$ consists of $\{x_i,y_j\}$ with $i \in [n]$ and $j \in [m]$, which guarantees (iii) as desired.
\, \, \, \, \, \, \, \, \, \, \, \, \, \, \, \, \, \, \, \,
\, \, \, \, \, \, \, \, \, \, \, \, \, \, \, \, \,
\end{proof}

Let $0 \leq i \leq n$ and $0 \leq j \leq m$ be integers.  We then set
$$\mathcal{M}_{(i,j)}=\{ F\subset V_n \cup V_m : |F\cap V_n| = i, |F\cap V_m| = j \}.$$
Thus $|\mathcal{M}_{(i,j)}|={n \choose i}{m \choose j}$.  Furthermore, we set
$$\mathcal{M}^{c}_{(i,j)}=\{V \setminus F: F \in \mathcal{M}_{(i,j)}\}.$$ In particular one has $$\mathcal{M}^{c}_{(i,j)}= \mathcal{M}_{(n-i,m-j)}.$$

For $\Delta=K_n\coprod K_m$ on $V_n \cup V_m$, finding facets of simplicial complexes
$$
\delta^{(k)}_{\mathcal{N}\mathcal{F}}(\Delta), \, \, \, \, \,
4\leq k \leq m+n+2$$
is indispensable for proving Theorem \ref{main}.

\begin{Lemma}{\label{FP}}
Let, as before, $\Delta=K_n\coprod K_m$ on $V_n \cup V_m$.  Suppose that $n\leq m$.
\begin{itemize}
\item[(i)]
If $4\leq k \leq n+2$, then
$\Ff(\delta^{(k)}_{\mathcal{N}\mathcal{F}}(\Delta))$ is equal to
\begin{eqnarray}
\label{H1}
\mathcal{M}^{c}_{(k-2, 0)} \cup \mathcal{M}^{c}_{(0, k-2)} \bigcup_{1\leq i \leq n, \, 1 \leq j \leq m, \, i+j=k-3}\mathcal{M}^{c}_{(i, j)}.
\end{eqnarray}
\item [(ii)]
One has
\begin{eqnarray}
\label{H2}
\Ff(\delta^{(n+3)}_{\mathcal{N}\mathcal{F}}(\Delta)) = \mathcal{M}^{c}_{(0,n+1)} \bigcup_{1\leq i \leq n, \, 1 \leq j \leq m, \, i+j=n}\mathcal{M}^{c}_{(i,j)}.
\end{eqnarray}
\item[(iii)]
If $n + 2 \leq m$ and $n+4\leq k \leq m+2$, then
$\Ff(\delta^{(k)}_{\mathcal{N}\mathcal{F}}(\Delta))$ is equal to
\begin{eqnarray}
\label{H3}
\mathcal{M}^{c}_{(0, k-2)}\cup\mathcal{M}^{c}_{(n, k-4-n)} \cup \bigcup_{1\leq i \leq n - 1, \, 1 \leq j \leq m, \, i+j=k-3}\mathcal{M}^{c}_{(i,j)}.
\end{eqnarray}
\item [(iv)]
If $m+3\leq k \leq m+n+2$, then
$\Ff(\delta^{(k)}_{\mathcal{N}\mathcal{F}}(\Delta))$ is equal to
\begin{eqnarray}
\label{H4}
\mathcal{M}^{c}_{(n, k-4-n)}\cup \mathcal{M}^{c}_{(k-4-m, m)} \cup \bigcup_{1\leq i \leq n - 1, \, 1 \leq j \leq m -1, \, i+j=k-3}\mathcal{M}^{c}_{(i,j)}.
 \end{eqnarray}
\end{itemize}
\end{Lemma}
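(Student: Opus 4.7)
The plan is to prove (i)--(iv) by induction on $k$, with base case $k = 3$ supplied by Lemma~\ref{first}(iii), which gives $\Ff(\delta^{(3)}_{\Nc\Fc}(\Delta)) = \mathcal{M}^c_{(1,1)}$. At each step I use equation (\ref{facetSR}): the facets of $\delta^{(k+1)}_{\Nc\Fc}(\Delta)$ are the complements in $V$ of the minimal vertex covers of $\delta^{(k)}_{\Nc\Fc}(\Delta)$.

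The key observation that organizes the induction is a type reduction. Call a subset $C \subset V$ of \emph{type} $(a,b)$ if $|C \cap V_n| = a$ and $|C \cap V_m| = b$. If a complex $\Delta'$ has $\Ff(\Delta') = \bigcup_{(i,j) \in S} \mathcal{M}^c_{(i,j)}$ for some finite set $S$ of pairs with $i \leq n, j \leq m$, then whether $C$ is a vertex cover of $\Delta'$ depends only on the type $(a,b)$ of $C$: explicitly, $C$ is a cover iff for every $(i,j) \in S$ one has $a \geq i+1$ or $b \geq j+1$. Hence the types of minimal vertex covers are the componentwise-minimal pairs in $[0,n] \times [0,m]$ satisfying all these inequalities, and since the complement of a cover of type $(a,b)$ lies in $\mathcal{M}^c_{(a,b)}$, equation (\ref{facetSR}) identifies the next index set $S_{k+1}$ with the set of minimal cover types for $S_k$. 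The whole induction is thereby reduced to a lattice-minimization problem.

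With this reduction the induction unfolds by elementary case analysis on the position of $k$ relative to $n$ and $m$. For $4 \leq k \leq n+2$ neither of the bounds $a \leq n$, $b \leq m$ is active; one verifies directly that the minimal covers for an $S_k$ of the form in (i) give precisely an $S_{k+1}$ still of that form, with interior indices shifting from $i+j = k-3$ to $k-2$ and extremes $(k-2, 0), (0, k-2)$ shifting to $(k-1, 0), (0, k-1)$. The bound $a \leq n$ first bites at $k = n+3$: the would-be extreme $(n+1, 0)$ is impossible and gets dropped, producing the asymmetric formula (ii). At $k = n+4$ the set $V_n$ itself qualifies as a minimal vertex cover of $\delta^{(n+3)}_{\Nc\Fc}(\Delta)$, contributing the new ``frozen'' index $(n, 0)$ to $S_{n+4}$; for subsequent $k \in [n+4, m+2]$ this extreme persists as $(n, k-4-n)$ (only its second coordinate growing) while the $V_m$-extreme $(0, k-2)$ continues to grow, giving (iii). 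The same transition, now on the $V_m$ side, occurs at $k = m+3 \to m+4$: the extreme $(0, m+1)$ becomes impossible and $V_m$ itself joins as a minimal cover, contributing the new frozen index $(0, m)$ and yielding (iv).

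The main obstacle is the bookkeeping at the two transitions $k \in \{n+2, n+3\}$ and $k \in \{m+2, m+3\}$. At each one must verify three things about the claimed $S_{k+1}$: every listed pair is a cover for $S_k$, every listed pair is in fact minimal, and no further minimal pair exists. The first two are short inequality checks. The third is the delicate one, since it requires showing that any putative candidate type $(a,b)$ is either equal to a listed pair or componentwise dominates one; this reduces to comparing $a+b$ with the staircase sum $i+j$ common to the interior indices in $S_k$, together with a separate argument for each extremal index and for candidates that reach the boundary $a = n$ or $b = m$. Throughout I use the convention that $\mathcal{M}^c_{(i,j)} = \emptyset$ whenever $i > n$ or $j > m$, which is what makes the four formulas (i)--(iv) dovetail cleanly at their shared boundaries; this also explains why the excluded case $(n, m) = (2, 2)$ is genuinely special, since there the bound $a \leq n = 2$ already activates at $k = 3$ and the induction collapses to period two at the base pair $(1, 1)$.
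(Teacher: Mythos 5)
Your proposal is correct and follows essentially the same route as the paper: an induction on $k$ that passes from $\Ff(\delta^{(k)}_{\Nc\Fc}(\Delta))$ to ${\rm MIN}(\delta^{(k)}_{\Nc\Fc}(\Delta))$ and back via (\ref{facetSR}), tracking everything by the pair $(|C\cap V_n|,|C\cap V_m|)$ and handling the transitions at $k=n+2,n+3$ and $k=m+2,m+3$ where a boundary index becomes unrealizable. Your explicit ``type reduction'' (coverage depends only on the type, and is equivalent to $a\geq i+1$ or $b\geq j+1$ for each index $(i,j)$) is just a cleaner packaging of the case analysis the paper carries out implicitly, and your level of detail at the routine steps matches the paper's.
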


\begin{proof}
First, we prove (\ref{H1}) by using induction on $k$.  Let $k=4$.  Since $$\bigcup_{1\leq i, \, 1\leq j, \, i+j=1}\mathcal{M}^{c}_{(i,j)} = \emptyset,$$
it is required to show that
\begin{eqnarray}
\label{AAAAA}
\Ff(\delta^{(4)}_{\mathcal{N}\mathcal{F}}(\Delta)) = \mathcal{M}^{c}_{(2,0)} \cup \mathcal{M}^{c}_{(0,2)}.
\end{eqnarray}
Lemma \ref{first} (iii) says that
${\rm MIN}(\delta^{(3)}_{\mathcal{N}\mathcal{F}}(\Delta))$
coincides with
\[
\{ \{x_{i},x_{i'}\} : i\neq i' \}
\cup
\{ \{y_{j},y_{j'}\} : j\neq j' \}.
\]
Thus by using $(\ref{facetSR})$ the desired $(\ref{AAAAA})$ follows.

Now, suppose that the formula $(\ref{H1})$ is valid for a fixed $k$ with $4 \leq k < n+2$.  Then a subset $F \subset V_n \cup V_m$ belongs to $\Ff(\delta^{(k)}_{\mathcal{N}\mathcal{F}}(\Delta))$ if and only if one of the following conditions is satisfied:
\begin{itemize}
\item
$|F \cap V_n| = n - (k - 2)$ and $V_m \subset F$;
\item
$V_n \subset F$ and $|F \cap V_m| = m - (k - 2)$;
\item
$|F \cap V_n| = p < n$, $|F \cap V_m| = q < m$ and $p + q = (n + m) - (k - 3)$.
\end{itemize}
It then follows that $M \subset V_n \cup V_m$ belongs to ${\rm MIN}(\delta^{(k)}_{\mathcal{N}\mathcal{F}}(\Delta))$ if and only if one of the following conditions is satisfied:
\begin{itemize}
\item
$M \subset V_n$ and $|M| = k - 1$;
\item
$M \subset V_m$ and $|M| = k - 1$;
\item
$|M \cap V_n| = p' \geq 1$, $|M \cap V_m| = q' \geq 1$ and $p' + q' = k - 2$.
\end{itemize}
In other words, one has
\[
{\rm MIN}(\delta^{(k)}_{\mathcal{N}\mathcal{F}}(\Delta))=\mathcal{M}_{(k-1,0)} \cup \mathcal{M}_{(0,k-1)} \bigcup_{1\leq i \leq n, \, 1 \leq j \leq m, \, i+j=k-2}\mathcal{M}_{(i,j)},
\]
and the desired $(\ref{H1})$ for $k+1$ follows.

\medskip

Let $k = n + 2$ in $(\ref{H1})$.  One has
\[
\Ff(\delta^{(n+2)}_{\mathcal{N}\mathcal{F}}(\Delta)) = \mathcal{M}^{c}_{(n,0)} \cup \mathcal{M}^{c}_{(0,n)} \bigcup_{1\leq i \leq n, \, 1 \leq j \leq m, \, i+j=n-1}\mathcal{M}^{c}_{(i,j)}.
\]
It then follows that
$${\rm MIN}(\delta^{(n+2)}_{\mathcal{N}\mathcal{F}}(\Delta))= \mathcal{M}_{(0,n+1)} \bigcup_{1\leq i \leq n, \, 1 \leq j \leq m, \, i+j=n}\mathcal{M}_{(i,j)},$$
from which the desired $(\ref{H2})$ follows.

\medskip

On the other hand, by using $(\ref{H2})$, one has
\[
{\rm MIN}(\delta^{(n+3)}_{\mathcal{N}\mathcal{F}}(\Delta)) =
\Mc_{(0, n+2)} \cup \Mc_{(n, 0)} \bigcup_{1 \leq i \leq n - 1, \, 1 \leq j \leq m, \, i + j = n + 1} \Mc_{(i, j)},
\]
from which the desired $(\ref{H3})$ for $k = n + 4$ follows.  Now, the obvious technique for proving $(\ref{H1})$ shows the desired $(\ref{H3})$ for $n+5 \leq k \leq m+2$.

\medskip

Finally, the routine computation as done above easily finishes showing $(\ref{H4})$ as desired. \, \, \, \, \, \, \, \, \, \,
\, \, \, \, \, \, \, \, \, \, \, \, \, \, \, \, \, \, \, \,
\, \, \, \, \, \, \, \, \, \,
\, \, \, \, \, \, \, \, \, \,
\end{proof}

We are now in the position to prove Theorem \ref{main} together with Corollary \ref{cor}.

\begin{proof}[Proof of Theorem \ref{main}]
Let $n\leq m$. Since
\[
i\in[n-1], \, \, \, j\in [m-1], \, \, \, i+j=n+m+2
\]
possesses no solution, it follows from $(\ref{H4})$ that
\[
\Ff(\delta^{(m+n+2)}_{\mathcal{N}\mathcal{F}}(\Delta)) = \mathcal{M}^{c}_{(n,m-2)}\cup \mathcal{M}^{c}_{(n-2,m)} = \Ff(\Delta).
\]
In other words,
\[
\delta^{(m+n+2)}_{\mathcal{N}\mathcal{F}}(\Delta) = \Delta.
\]
To finish our proof, one must show that
\begin{eqnarray}
\label{simeq}
\delta^{(k)}_{\mathcal{N}\mathcal{F}}(\Delta)\not\simeq \Delta,
\, \, \, \, \,
1 \leq k < n + m + 2.
\end{eqnarray}
As $n \leq m$, this means  $3 \leq m$.  Lemma \ref{first} says that
\[
\dim \delta^{(1)}_{\mathcal{N}\mathcal{F}}(\Delta) = 1,
\dim \delta^{(2)}_{\mathcal{N}\mathcal{F}}(\Delta) = m-1,
\dim \delta^{(3)}_{\mathcal{N}\mathcal{F}}(\Delta) =n+m-3
\]
Furthermore, it follows from Lemma \ref{FP} that
\[ \dim \delta^{(k)}_{\mathcal{N}\mathcal{F}}(\Delta)
 =  \left\{
\begin{array}{ll}
      n+m-k+2 & \text{if}\ \, 4\leq k \leq n+2, \\
      m-1 & \text{if}\ \, k=n+3, \\
      n+m-k+3 & \text{if}\ \, n+3 < k < n+m+2 \\
\end{array}
\right. \]
In particular,
\begin{eqnarray}
\label{simeqdim}
\dim \delta^{(k)}_{\mathcal{N}\mathcal{F}}(\Delta) > 1,
\, \, \, \, \,
2 \leq k < k' < n + m + 2.
\end{eqnarray}
Furthermore, since $\delta^{(1)}_{\mathcal{N}\mathcal{F}}(\Delta) \not\simeq \Delta$, the desired (\ref{simeq}) follows.
\, \, \, \, \, \, \, \, \, \, \, \, \,
\end{proof}

\begin{proof}[Proof of Corollary \ref{cor}]
Let $\Delta = K_n\coprod K_m$.  Lemma \ref{first} says that $K_{n,m} = \delta^{(1)}_{\Nc\Fc}(\Delta)$.  Thus the desired result follows from $(\ref{simeqdim})$.
\, \, \, \, \, \, \, \, \, \, \, \, \, \, \,
\, \, \, \, \, \, \, \, \, \,
\end{proof}

We finish this paper with giving a reasonable question.  Let $\Delta$ and $\Gamma$ be simplicial complexes on $[n]$.  We say that $\Delta$ and $\Gamma$ are {\em $\Nc\Fc$-equivalent} if there is $k \geq 0$ with $\Gamma = \delta_{\Nc\Fc}^{(k)}(\Delta)$.  It follows from Lemma \ref{NF} that the $\Nc\Fc$-equivalence is an equivalence relation.  The equivalence class to which $\Delta$ belongs consists of
\[
\delta_{\Nc\Fc}^{(k)}(\Delta), \, \, \, \, \, 0 \leq k \leq q,
\]
where $q$ is the $\Nc\Fc$-number of $\Delta$.  Let $\Nc\Fc(n)$ denote the number of equivalence classes in the set of simplicial complexes on $[n]$.  A reasonable question is to find a combinatorial formula to compute $\Nc\Fc(n)$.


\begin{thebibliography}{9}
\bibitem{F}
S.~Faridi, The facet ideal of a simplicial complex, {\em Manuscripta Math.} {\bf 109} (2002), 159--174.
\bibitem{HH}
J.~Herzog and T.~Hibi, ``Monomial Ideals,'' GTM 260, Springer, 2011.
\bibitem{R}
G.~A.~Reisner, Cohen--Macaulay quotients of polynomial rings, {\em Advances in Math.} {\bf 21} (1976), 30--49.
\bibitem{S}
R.~P.~Stanley, The upper bound conjecture and Cohen--Macaulay rings, {\em Studies in Applied Math.} {\bf 54} (1975), 135--142.
\end{thebibliography}
\end{document}